    \renewcommand{\leq}{\leqslant}
\theoremstyle{plain}
\newtheorem{thm}{Theorem}[section]
\newtheorem*{thm*}{Theorem}
\newtheorem{dfn}[thm]{Definition}
\newtheorem{lem}[thm]{Lemma}
\newtheorem{prop}[thm]{Proposition}
\newtheorem{cor}[thm]{Corollary}
\theoremstyle{remark}
\newtheorem{oss}[thm]{Remark}
\theoremstyle{definition}
\newtheorem{ex}[thm]{Example}
\DeclareMathOperator{\imm}{\mathrm{Im}}
\DeclareMathOperator{\SSS}{\mathbb{S}}
\DeclareMathOperator{\BBB}{\mathbb{B}}
\DeclareMathOperator{\N}{\mathbb{N}}
\DeclareMathOperator{\var}{\vartriangleleft}
\title{\bf{Linearly shellable complexes}}
\author{}
\author{Paolo Sentinelli\thanks{ Dipartimento di Matematica, Politecnico di Milano, Milan, Italy. \\ \href{mailto:paolosentinelli@gmail.com}{paolosentinelli@gmail.com}}}
\date{}
\begin{document}

\maketitle

\vspace{-4em}

\begin{abstract}
We introduce the class of linearly shellable pure simplicial complexes. The characterizing property is the existence of a labeling of their vertices such that all linear extensions of the Bruhat order on the set of facets are shelling orders. Coxeter complexes of weak intervals and lower Bruhat intervals of parabolic right quotients, as type--selected Coxeter complexes of lower Bruhat intervals of parabolic left quotients, are proved to be linearly shellable. We also introduce the notion of linear strong shellability.
\end{abstract}

$\,$

\section{Introduction}
Shelling orders arising from linear extensions of posets appear more and more often in literature. Maybe the first constructions of this type are the shelling orders of Coxeter complexes realized by A. Bj\"{o}rner in \cite{bjorner}, using linear extensions of the weak order of Coxeter groups. Many years later Ardila, Castillo and Samper in \cite{ardila} have proved that the linear extensions of the poset of bases of a matroid, ordered with the internal order, are shelling orders for the independence complex of the matroid. 
An intimate relation between linear extensions and shelling orders has been also enlightened in a recent preprint of S. Lacina \cite{lacina}.
Connections between linear extensions and shellings are extensively investigated by  D. Bolognini and the author in \cite{BoloSentiBulletin}, where the focus was on Coxeter-type results. In fact, a finite pure simplicial complex is uniquely determined by its set of facets and this set can be always considered as a set of Grassmannian permutations (see \cite[Section 3]{BoloSentiBulletin}). Hence it is natural to order a pure simplicial complex with the Bruhat order; in this vein, we can see the dual graph of a simplicial complex (i.e. its flip graph) as a Bruhat graph in the parabolic setting (as observed in \cite[Section 5]{BoloSentiBulletin}). 

In this paper we introduce the class of \emph{linearly shellable} pure simplicial complexes. The main property of these complexes is that there exists a way to give names to the vertices (natural numbers) such that every linear extension of the Bruhat order is a shelling order (see Theorem \ref{teorema tutti shelling}). As stated in the theorem, this is equivalent to have the lexicographic order as shelling order.
For example, it is immediate that the independence complex of a matroid is linearly shellable. A class of linearly shellable complexes in Coxeter groups arises as Coxeter complexes of finite subsets of a Coxeter group, such as intervals and finite order ideals of the weak order, and finite order ideals of parabolic quotients. These are studied in Section \ref{sezione Coxeter}; they are restrictions or type--selections of Coxeter complexes. 

The linear shellability of these complexes follows from the order preserving property for the Bruhat order of the projections $P^J : W \rightarrow W^J$ and a splitting of the Bruhat order along parabolic quotients due to V. Deodhar (see Theorem \ref{th deodhar} and its corollaries). Linear extensions of the right weak order on the set $X\subseteq W$ we are considering provide shelling orders of their Coxeter complex $C(X)$. To obtain the linear shellability we need the Bruhat order on a set of Grassmannian permutations in bijection with $X$; to do this we consider
a refinement of the poset $(X,\leq)$, where $\leq$ is the Bruhat order (see the proof of Theorem \ref{teorema lineare} and Theorem \ref{teorema tutti shelling}). Such a refinement is denoted by $\preccurlyeq$ (see also Figure \ref{Hasse2} and Remark \ref{osservazione Bruhat}).  We also consider type--selected Coxeter complexes for finite order ideal in the Bruhat order of quotients $W^J$, generalizing to Coxeter groups the result of \cite[Theorem 4.5]{BoloSentiBulletin} (see Theorem \ref{teorema selected}), and two--sided Coxeter complexes, as introduced in \cite{petersen}.
We collect here some of the linearly shellable simplicial complexes studied in the section:
\begin{enumerate}
 \item the Coxeter complex of an order ideal of the right weak order (Theorem \ref{teorema lineare}). These include the \emph{Coxeter cones} of J. Stembridge's paper \cite{stembridge},
    \item the Coxeter complex of an interval $[u,v]_R$ of the right weak order (Corollary \ref{corollario intervalli}),
    
    \item the Coxeter complex of a lower Bruhat interval of ${^JW}$ (Corollary \ref{corollario quozienti}). The Coxeter complex of ${^JW}$ is a \emph{quotient Coxeter complex} in M. Wachs' paper \cite{wachs}.
    
    \item The type--selected Coxeter complex of a lower Bruhat interval of $W^J$, for type $S\setminus J$ (Theorem \ref{teorema selected}). These complexes appear as barycentric subdivisions of \emph{Schubert matroids} in \cite{BoloSentiBulletin}.

    \item The \emph{two--sided Coxeter complex}, as introduced in T. Petersen's paper \cite{petersen} (Theorem  \ref{teorema two-side}).
\end{enumerate}

The last section of the paper is devoted to prove properties for linear strong shellability analogous to the one of Section \ref{sezione linear}. \emph{Strong shellability} has been introduced by Guo, Shen and Wu in \cite{coreani}; the authors have proved, among other results, that order ideals of the Bruhat order on Grassmannian permutations (\emph{pure shifted complexes}) and independence complexes of matroids are strongly shellable. We obtain our results on linear strong shellability as consequences of Proposition \ref{lemma che non sa nessuno strong}; in particular we introduce \emph{promotion} and \emph{evacuation} for strong shelling orders.

\section{Notation and 
preliminaries}
In this section we fix notation and recall some definitions useful for the rest of the paper. We refer to \cite{Stanley} for posets and to \cite{BB} for Coxeter groups.

Let $\mathbb{N}$ be the set of non--negative integers. For $n\in \mathbb{N}\setminus \{0\}$, we use the notation $[n]:=\left\{1,2,\ldots,n\right\}$. For a finite set $X$, we denote by $|X|$ its cardinality and by $\mathcal{P}(X)$ its power set, which is an abelian group with the operation given by symmetric difference $A+B:=(A\setminus B) \cup (B\setminus A)$, for all $A,B\subseteq X$. Given a function $f:X\rightarrow Y$, we let $\imm(f):=\{f(x):x\in X\}$. If $Y$ is a poset, then $\imm(f)\subseteq Y$ is an induced subposet.
We denote by $X^n$ the $n$-th power under Cartesian product, by $x_i$ the projection of $x\in X^n$ on the $i$-th factor. Sometimes we write $x_1\ldots x_n \in X^n$ instead of $(x_1,\ldots,x_n)\in X^n$.
For $k\in \N$, $k \leqslant |X|$, we define the $k$-th \emph{configuration space} of $X$ by
$$\mathrm{Conf}_k(X):=\left\{x\in X^k: x_i=x_j \Rightarrow i=j, \, \, \forall\, i,j\in [k]\right\},$$
and, if $<$ is a total order on $X$, the  $k$-th \emph{unordered configuration space} of $X$ by
$$X^k_<:=\left\{x\in X^k: i<j \Rightarrow x_i < x_j, \, \forall \,i,j\in [k]\right\}.$$ 

If $(X,\preccurlyeq)$ is a poset, then $X^n$ is the poset defined by letting $x \leqslant y$ if and only $x_i \preccurlyeq y_i$, for all $i\in [n]$ and $x,y\in X^n$. The set $[n]$ is a poset under the natural order; so, for $k \in \N\setminus \{0\}$, the set $[n]^k$ is considered to be a poset and $[n]^k_<$ an induced subposet. We denote by $\var$ a covering relation in a poset $P$, i.e. $x \var y$ if and only if $x<y$ and the interval $[x,y]:=\{z\in P: x \leq z \leq y\}$ is $\{x,y\}$. A \emph{linear extension} of a finite poset $P$ of cardinality $n$ is a function
$L: P\rightarrow [n]$
such that $x< y$ implies $L(x)<L(y)$, for all $x,y\in P$. 

\vspace{1em}

Now we recall some general results on Coxeter groups which we use in Section \ref{sezione Coxeter}. 
Let $(W,S)$ be a Coxeter system, i.e.\ a presentation of the group $W$ given by a set $S$ of involutive generators and relations encoded by a \emph{Coxeter matrix}
$m:S\times S \rightarrow \{1,2,...,\infty\}$ (see \cite[Ch.~1]{BB}). When $|S|<\infty$ the system $(W,S)$ is said to be of \emph{finite rank}. A Coxeter matrix over $S$ is a symmetric matrix which
satisfies the following conditions for all $s,t\in S$:
\begin{enumerate}
  \item $m(s,t)=1$ if and only if $s=t$;
  \item $m(s,t)\in \{2,3,...,\infty\}$ if $s\neq t$.
\end{enumerate}  The presentation
$(W,S)$ of the group $W$ is then the following:
$$\left\{
  \begin{array}{ll}
    \mathrm{generators}: & \hbox{$S$;} \\
    \mathrm{relations}: & \hbox{$(st)^{m(s,t)}=e$,}
  \end{array}
\right.$$ for all $s,t\in S$, where $e$ denotes the identity in $W$. The Coxeter matrix $m$ attains the value $\infty$ at $(s,t)$ to indicate that there is no relation
between the generators $s$ and $t$.
The class of words expressing an element of $W$ contains words of minimal length; the \emph{length function} $\ell: W \rightarrow
\mathbb{N}$ assigns to an element $w\in W$ such minimal length. The identity $e$ is represented by the empty word and then $\ell(e)=0$. A \emph{reduced word} or \emph{reduced expression} for an element $w\in W$ is a word of minimal length representing $w$. If $J\subseteq S$ and $v\in W$, we let
\begin{gather*} W^J:=\Set{w\in W:\ell(w)<\ell(ws)~\forall~s\in J},
\\ {^JW}:=\Set{w\in W:\ell(w)<\ell(sw)~\forall~s\in J},
\\ D_L(v):=\Set{s\in S:\ell(sv)<\ell(v)},
\\ D_R(v):=\Set{s\in S:\ell(vs)<\ell(v)}.
\end{gather*}
With $W_J$ we denote the subgroup of $W$ generated by $J\subseteq S$; such a group is usually called a \emph{parabolic subgroup} of $W$. In particular, $W_S=W$ and $W_\varnothing =
\Set{e}$. 

Given a Coxeter system $(W,S)$, we let $\leqslant_R$  and $\leqslant$ be the \emph{right weak order}  and the \emph{Bruhat order}  on $W$, respectively. The covering relations of the right weak order are characterized as follows: 
$u \var_R v$ if and only if $\ell(u)<\ell(v)$ and $u^{-1}v \in S$. 
The covering relations of the Bruhat order are characterized as follows:
$u \var v$ if and only if $\ell(u)=\ell(v)-1$ and $u^{-1}v \in T$, where $T$ is the set of reflections of $(W,S)$.
The posets $(W,\leqslant_R)$ and $(W,\leqslant)$ are graded with rank function $\ell$ and $(W,\leqslant_R) \hookrightarrow (W,\leqslant)$. Their intervals have finite cardinality.
For $J\subseteq S$, each element $w\in W$ factorizes uniquely as
$w=w^Jw_J$, where $w^J\in W^J$, $w_J\in W_J$ and $\ell(w)=\ell(w_J)+\ell(w^J)$; see~\cite[Proposition~2.4.4]{BB}. We consider the idempotent function $P^J:W
\rightarrow W$ defined by
\begin{equation*} P^J(w)=w^J,
\end{equation*} for all $w\in W$. This function is order preserving for the Bruhat order
(see \cite[Proposition~2.5.1]{BB}). 
We also have that
\begin{equation} \label{inclusione proiezioni}
    I\subseteq J \, \, \Rightarrow \, \, P^J \circ P^I =P^J,
\end{equation} for all $I,J\subseteq S$.
For $s\in S$ we let $$P^{(s)}:=P^{S\setminus \{s\}}.$$ 
The following theorem, due to V. Deodhar, is \cite[Theorem 2.6.1]{BB}.
\begin{thm} \label{th deodhar}
    Let $E\subseteq \mathcal{P}(S)$ and $I:=\bigcap_{J\in E}J$. Let $u\in W^I$ and $v\in W$. Then
    $$u\leq v \, \, \Leftrightarrow \, \, P^J(u) \leq P^J(v) \, \,\forall \, J \in E.$$
\end{thm}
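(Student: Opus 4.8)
The plan is to prove the two implications separately: the forward one is immediate, and the backward one requires an induction on $\ell(v)$.

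\emph{Forward direction.} If $u\le v$ then, since each $P^J$ is order preserving for the Bruhat order, $P^J(u)\le P^J(v)$ for every $J\in E$. Note this does not even use the hypothesis $u\in W^I$.

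\emph{Backward direction.} I would argue by induction on $\ell(v)$, assuming throughout that $u\in W^I$ and that $P^J(u)\le P^J(v)$ for all $J\in E$, and aiming to conclude $u\le v$. For the base case $\ell(v)=0$ we have $v=e$, so $P^J(u)\le e$ forces $P^J(u)=e$ for all $J\in E$; as $P^J(u)=e$ is equivalent to $u\in W_J$, we get $u\in\bigcap_{J\in E}W_J=W_{\bigcap_{J\in E}J}=W_I$. Since also $u\in W^I$ and $W^I\cap W_I=\{e\}$, this forces $u=e\le v$. For the inductive step I would pick $s\in D_L(v)$ and control the effect of left multiplication by $s$ on the projections: applying the preceding Lemma to $P^J(x)\in W^J$, one checks that whenever $sx<x$ one has $P^J(sx)=s\,P^J(x)\lessdot P^J(x)$ if $s\in D_L(P^J(x))$, and $P^J(sx)=P^J(x)$ otherwise, and moreover that $s\notin D_L(x)$ forces $s\notin D_L(P^J(x))$ for every $J$. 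The argument then splits in two cases. If $s\in D_L(u)$, the Lemma (case 1) gives $su\in W^I$, and for each $J$ a short case analysis on whether $s$ is a left descent of $P^J(u)$ and of $P^J(v)$—combined with the lifting property of \cite{BB} applied to $P^J(u)\le P^J(v)$—yields $P^J(su)\le P^J(sv)$; the inductive hypothesis applied to $(su,sv)$, with $\ell(sv)<\ell(v)$, gives $su\le sv$, and since $s\in D_L(u)\cap D_L(v)$ the lifting property then gives $u\le v$. If instead $s\notin D_L(u)$, then $s\notin D_L(P^J(u))$ for all $J$, and the lifting property applied to $P^J(u)\le P^J(v)$ gives $P^J(u)\le P^J(sv)$ for every $J$; the inductive hypothesis applied to $(u,sv)$ yields $u\le sv<v$, whence $u\le v$.

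The main obstacle will be the inductive step, precisely the verification that the projection inequalities survive multiplication by $s$. The delicate point is that $s\in D_L(x)$ does \emph{not} imply $s\in D_L(P^J(x))$, so each of $P^J(su)$ and $P^J(sv)$ may either descend by $s$ or stay fixed, producing several combinations to reconcile. Organizing these through the trichotomy of the preceding Lemma (to compute $P^J(sx)$) together with the lifting property (to compare the two sides) is what makes the step go through, and the hypothesis $u\in W^I$ enters exactly twice: to start the induction in the base case, and to guarantee $su\in W^I$ when $s\in D_L(u)$.
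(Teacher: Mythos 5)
Your proof is correct, but note that the paper itself contains no proof of this statement to compare against: it is quoted as Deodhar's theorem with a citation to \cite[Theorem 2.6.1]{BB}. Your argument is a valid self-contained reconstruction along the standard lines of that result (order-preservation of $P^J$ for the forward implication; induction on $\ell(v)$ driven by the lifting property for the converse). I checked the delicate points and they all hold: the computation of $P^J(sx)$ when $sx<x$ via the trichotomy lemma (case 2 of that lemma is excluded because it would force $\ell(sx)=\ell(x)+1$); the implication $s\in D_L(P^J(x))\Rightarrow s\in D_L(x)$, whose contrapositive you use; the four-subcase verification, via the lifting property, that $P^J(su)\leq P^J(sv)$ for every $J\in E$ when $s\in D_L(u)\cap D_L(v)$; the return step from $su\leq sv$ to $u\leq v$ (from $su\leq sv\leq v$ and $s\notin D_L(su)$, lifting gives $s(su)=u\leq v$); and the base case, which rests on $\bigcap_{J\in E}W_J=W_{I}$ and $W^I\cap W_I=\{e\}$. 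The hypothesis $u\in W^I$ is indeed used exactly where you say it is: to force $u=e$ in the base case, and (via case 1 of the trichotomy lemma applied with $J=I$) to guarantee $su\in W^I$ so the inductive hypothesis applies to the pair $(su,sv)$.
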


\vspace{1em}
Let $J\subseteq S$. Since $\bigcap_{s\in S\setminus J}(S\setminus \{s\})=J$, we obtain as immediate corollary the next statement.
\begin{cor} \label{deodhar}
    Let $u,v\in W^J$. Then 
    $$u\leq v \, \, \Leftrightarrow \, \, P^{(s)}(u) \leq P^{(s)}(v) \, \, \, \forall \, s \in S\setminus J.$$
\end{cor} 
The following two corollaries lay at the core of the shellability of Coxeter complexes. For the first one see \cite[Corollary 2.6.2]{BB}. 
\begin{cor} \label{cor deodhar0}
    Let $u,v\in W$. Then 
   $$u\leq v \, \, \Leftrightarrow \, \, P^{(s)}(u) \leq P^{(s)}(v) \, \,\forall \, s \in D_R(u).$$
\end{cor}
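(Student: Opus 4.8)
The plan is to obtain this as a direct specialization of Deodhar's Theorem \ref{th deodhar}, choosing the family $E$ so that the intersection $I$ becomes the complement of $D_R(u)$, which makes the hypothesis $u\in W^I$ automatic. The forward implication needs nothing: if $u\leq v$ then, since each $P^{(s)}$ is order preserving for the Bruhat order, $P^{(s)}(u)\leq P^{(s)}(v)$ for every $s\in S$, in particular for every $s\in D_R(u)$. So the whole content is the converse, and I would package both directions at once through the theorem.

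For the full equivalence, assume first that $u\neq e$, so that $D_R(u)\neq\varnothing$, and set $E:=\{S\setminus\{s\}:s\in D_R(u)\}\subseteq\mathcal{P}(S)$ with $I:=\bigcap_{J\in E}J$. A one-line computation gives $I=\bigcap_{s\in D_R(u)}(S\setminus\{s\})=S\setminus D_R(u)$. The key step is to verify the hypothesis $u\in W^I$ of Theorem \ref{th deodhar}. By definition $W^I=\{w\in W:\ell(w)<\ell(wr)\ \forall\, r\in I\}=\{w\in W: D_R(w)\cap I=\varnothing\}$, and since $I=S\setminus D_R(u)$ this is exactly $\{w\in W: D_R(w)\subseteq D_R(u)\}$; as $D_R(u)\subseteq D_R(u)$ trivially holds, we get $u\in W^I$. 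Applying Theorem \ref{th deodhar} to this $E$ and $I$, with $u\in W^I$ and $v\in W$ arbitrary, yields
$$u\leq v \, \, \Leftrightarrow \, \, P^{S\setminus\{s\}}(u) \leq P^{S\setminus\{s\}}(v) \, \,\forall \, s \in D_R(u),$$
which is the claim, since $P^{(s)}=P^{S\setminus\{s\}}$. The remaining case $u=e$ is handled separately and trivially: the left-hand side $e\leq v$ always holds, while the right-hand side is vacuously true because $D_R(e)=\varnothing$.

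The conceptual crux, and the step I would flag, is the identification $W^{S\setminus D_R(u)}=\{w: D_R(w)\subseteq D_R(u)\}$ and the observation that it contains $u$: this is precisely what makes restricting the quantifier to $s\in D_R(u)$ legitimate, because it certifies $u$ as the minimal-length representative of its coset modulo $W_I$, so Deodhar's criterion applies over the smaller index set $E$. Everything else is a routine unwinding of the definitions of $P^{(s)}$, $W^J$, and $D_R$; no genuine inequality-chasing in the Bruhat order is needed once the correct $E$ has been chosen.
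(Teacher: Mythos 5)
Your proof is correct and is precisely the derivation the paper points to by citing \cite[Corollary 2.6.2]{BB}: one specializes Theorem \ref{th deodhar} to $E=\{S\setminus\{s\}:s\in D_R(u)\}$, computes $I=S\setminus D_R(u)$, and checks $u\in W^I$ from $D_R(u)\cap I=\varnothing$. The edge case $u=e$ and the identification $W^{S\setminus D_R(u)}=\{w:D_R(w)\subseteq D_R(u)\}$ are both handled correctly.
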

%We provide a proof for the second corollary for sake of completeness.
%\begin{cor} \label{cor deodhar}
%    Let $(W,S)$ be a Coxeter system such that $|S|<\infty$. Let $u,v\in W$. Then 
%    $|\{(P^{(s)}(u),s): s\in S\}\cap \{(P^{(s)}(v),s): s\in S\}|=|S|-1$ if and only if $v^{-1}u\in S$.
%\end{cor}
%\begin{proof}
%If $u=vr<v$ for some $r\in S$, we have that 
%$P^{(s)}(v)=P^{(s)}(vr)=P^{(s)}(u)$, for all $s\in S\setminus \{r\}$. Hence, since $u\neq v$, by Corollary \ref{deodhar} we have that $P^{(r)}(u)< P^{(r)}(v)$. 
%If $\{(P^{(s)}(u),s): s\in S\}\cap \{(P^{(s)}(v),s): s\in S\}=\{(P^{(s)}(u),s): s\in S\setminus \{r\}\}$ for some $r\in S$, we have that $u\neq v$ and, by Theorem \ref{th deodhar}, $u^{\{r\}}\leq v$ and $v^{\{r\}}\leq u$, i.e. $v^{-1}u\in S$, since $\bigcap_{s\in S\setminus \{r\}} (S\setminus \{s\})=\{r\}$.
%\end{proof}
The next corollary provides, for $J=\varnothing$, an isomorphism between the Cayley graph of $(W,S)$ and the flip graph of its Coxeter complex. The general version we state here is suitable for type--selected Coxeter complexes (see Section \ref{sezione Coxeter}).
\begin{cor} \label{cor deodhar1}
    Let $(W,S)$ be a Coxeter system and $ J\subseteq S$ such that $|S\setminus J|<\infty$. Let $u,v\in W^J$. Then 
    $$ |\{(P^{(s)}(u),s): s\in S\setminus J\}\cap \{(P^{(s)}(v),s): s\in S\setminus J\}| 
        = |S\setminus J|-1 $$ $$\Longleftrightarrow  \, \,  \mbox{$u\neq v$ and $P^{J \cup \{r\}}(u)=P^{J \cup \{r\}}(v)$ for some $r\in S\setminus J$.}$$ 
\end{cor}
\begin{proof}
   Let $u \neq v$ and $P^{J \cup \{r\}}(u)=P^{J \cup \{r\}}(v)$ for some $r\not \in J$. If $s\not \in J \cup \{r\}$ we have that  
   \begin{eqnarray*}
       P^{(s)}(u) &=& (P^{(s)}\circ P^{J \cup \{r\}})(u) \\
       && (P^{(s)}\circ P^{J \cup \{r\}})(v)=P^{(s)}(v).
   \end{eqnarray*} Moreover, since $u\neq v$, by Corollary \ref{deodhar} we obtain $P^{(r)}(u) \neq P^{(r)}(v)$.
 Now assume that there exists $r\not \in J$ such that 
 $$\{(P^{(s)}(u),s): s\in S\setminus J\}\cap \{(P^{(s)}(v),s): s\in S\setminus J\} $$ $$ = \{(P^{(s)}(u),s): s\not \in J\cup \{r\}\}.$$ This implies $u\neq v$. Since $\bigcap_{s\not \in J\cup \{r\}} (S\setminus \{s\})=J \cup \{r\}$ and $P^{(s)}P^{J\cup \{r\}}=P^{(s)}$ for all $s\not \in J\cup \{r\}$, by Theorem \ref{th deodhar} we have that $u^{J\cup \{r\}}\leq v$ and $v^{J\cup \{r\}}\leq u$. By the order--preserving property of the projections, we obtain $u^{J\cup \{r\}}\leq v^{J\cup \{r\}}$ and $v^{J\cup \{r\}}\leq u^{J\cup \{r\}}$, i.e. $P^{J \cup \{r\}}(u)=P^{J \cup \{r\}}(v)$.
\end{proof}

%\vspace{1em}

\vspace{1em}

We consider the symmetric group $S_n$ of order $n!$ as a Coxeter group, with generators given by simple transpositions $S:=\{s_1,\ldots,s_{n-1}\}$, where, in one--line notation, $s_i:=12\ldots (i+1)i\ldots n$, for all $i\in [n-1]$.
For a permutation $w\in S_n$ we have a bijection $$D_R(w)\simeq \{i \in [n-1]: w(i)>w(i+1)\},$$ 
and, for $I\subseteq [n-1]$, $$S_n^{\{s_i: \, \, i\in I\}}\simeq \{w\in S_n : i\in I\, \Rightarrow\, w(i)<w(i+1)\}.$$ 
The action of $P^{(s)}$ on a permutation is explained in \cite[Section 2.4]{BB}. For example, in $S_5$ we have that $P^{(s_2)}(35412)=35124$ and $P^{(s_3)}(35412)=34512$.
For $k\in [n-1]$ the poset $[n]^k_<$ is isomorphic to $S_n^{S\setminus \{s_k\}}$ with the Bruhat order (see e.g. \cite[Proposition 2.4.8]{BB}). Moreover $[n]^n_< \simeq S_n^S$. For this reason we write about the \emph{Bruhat order} on $[n]^k_<$. This partial order is also known as \emph{Gale order}.
For example, in $[8]^4_<$ we have  $1357 \leqslant 1468$ and  $1467 \nleqslant 2378$. Notice that $(S_n^{S\setminus \{s_k\}},\leq)=(S_n^{S\setminus \{s_k\}},\leq_L)$, for all $k\in [n-1]$ (see \cite[Exercise 3.2]{BB}), where $\leq_L$ is the left weak order. 
We denote by $<_{lex}$ the \emph{lexicographic order} on $[n]^k_<$; it is a linear extension of the Bruhat order.
We also repeatedly use the identification $$[n]^k_< \simeq \{X\subseteq [n]: |X|=k\},$$ where $[n]^0_<:=\{\varnothing\}$. Then, identifying $U:=\bigcup\limits_{k=0}^n[n]^k_<$ with $\mathcal{P}(X)$,  it makes sense to write $x \cap y$, $x \cup y$ and the symmetric difference $x+y$, for all $x,y \in U$.

\vspace{1em}

A pure simplicial complex $X$ of dimension $d\in \N$ on $n\in \N\setminus \{0\}$ vertices is uniquely determined by the set of its facets, which are sets of cardinality $d+1$. Let $V(X)$ be the set of vertices of $X$. Given a bijective function $\sigma : V(X)\rightarrow [n]$, we can identify $X$ with a subset of $[n]^{d+1}_<$. For example, in Figure \ref{Hasse0} we have a pure simplicial complex of dimension $3$ on $8$ vertices, with $20$ facets, i.e. a subset of $[8]^4_<$ of cardinality $20$. By this identification, a total order on the set of facets of the pure simplicial complex $X$ uniquely determines an element of a suitable configuration space of $[n]^{d+1}_<$. On the other hand, an element of any configuration space of $[n]^{d+1}_<$ uniquely determines a simplicial complex and a total order on its facets. We pursue this point of view in the following definition of shelling order.
\begin{dfn} Let $h\in \N\setminus \{0\}$.
    An element $C\in \mathrm{Conf}_h([n]^k_<)$ is a \emph{shelling order} if $i,j\in [h]$, with $i<j$, implies that
there exists $r<j$ such that
$C_r=C_j+\{a,b\}$, with $a\in C_j\setminus C_i$ and $b\in C_r \setminus C_j$.
\end{dfn}
A pure simplicial complex is \emph{shellable} if there exists a shelling order on its facets.
The following is \cite[Proposition 5.3]{BoloSentiBulletin}.
\begin{prop} \label{lemma che non sa nessuno}
Let $C \in \mathrm{Conf}_h([n]^k_<)$ be a shelling order, with $h \geqslant 3$. If $|C_{h-1} \cap C_h|<k-1$ then $(C_1,\ldots,C_h,C_{h-1})$ is a shelling order.
\end{prop}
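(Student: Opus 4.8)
The plan is to read ``$(C_1,\ldots,C_h,C_{h-1})$'' as the order obtained from $C$ by transposing its last two facets, that is $D:=(C_1,\ldots,C_{h-2},C_h,C_{h-1})$, and to verify the defining shelling inequality for every pair $i<j$ of positions of $D$. Since $D_r=C_r$ for $r\leq h-2$, while $D_{h-1}=C_h$ and $D_h=C_{h-1}$, the pairs split into three groups. For $j\leq h-2$ the two orders agree and the original witness, having index $<j\leq h-2$, is unchanged. For $j=h-1$ (so $D_j=C_h$) and any $i\leq h-2$, I would invoke the original condition for the pair $(i,h)$, which yields $r<h$ with $C_r=C_h+\{a,b\}$, $a\in C_h\setminus C_i$, $b\in C_r\setminus C_h$; the only danger is $r=h-1$, but that would force $C_{h-1}=C_h+\{a,b\}$, hence $|C_{h-1}\cap C_h|=k-1$, contradicting the hypothesis, so $r\leq h-2$ and $D_r=C_r$ is a legal witness. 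This is the first place where $|C_{h-1}\cap C_h|<k-1$ is used.

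The delicate case is $j=h$, i.e.\ the facet $C_{h-1}$ now sitting last. For $i\leq h-2$ the original pair $(i,h-1)$ already supplies a witness of index $<h-1\leq h-2$, still admissible. The genuine obstacle is the pair $i=h-1$, $j=h$ of $D$, namely the transposed pair $(C_h,C_{h-1})$: I must produce some $r\leq h-2$ and $a\in C_{h-1}\setminus C_h$ with $C_r=C_{h-1}+\{a,b\}$, that is, an earlier facet sharing with $C_{h-1}$ a codimension-one face that avoids $C_h$. Writing $M:=C_{h-1}\cap C_h$, this amounts to the key claim that some codimension-one face $C_{h-1}\setminus\{a\}$ with $a\notin M$ already occurs among $C_1,\ldots,C_{h-2}$.

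To establish this claim I would chain two instances of the original shelling condition. First, apply it to the pair $(h-1,h)$: it gives $r_0<h$ with $C_{r_0}=C_h+\{a_0,b_0\}$, $a_0\in C_h\setminus C_{h-1}$, $b_0\in C_{r_0}\setminus C_h$; as before $r_0=h-1$ is excluded by the hypothesis, so $r_0\leq h-2$, and since $a_0\notin M$ we get $M\subseteq C_h\setminus\{a_0\}=C_{r_0}\cap C_h\subseteq C_{r_0}$. Thus an \emph{early} facet $C_{r_0}$ engulfs all of $M$. Second, apply the condition to the pair $(r_0,h-1)$: it returns $r<h-1$ with $C_r=C_{h-1}+\{a,b\}$, $a\in C_{h-1}\setminus C_{r_0}$, $b\in C_r\setminus C_{h-1}$. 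Because $M\subseteq C_{r_0}$, the element $a\notin C_{r_0}$ cannot lie in $M$; as $a\in C_{h-1}$ this forces $a\in C_{h-1}\setminus M=C_{h-1}\setminus C_h$, exactly the desired vertex, while $r\leq h-2$ makes $C_r$ an admissible witness in $D$. This proves the claim and closes the last case.

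The whole argument is thus a finite case analysis whose only non-formal ingredient is the two-step chaining above; the crux is realizing that the transposed pair cannot be treated in one stroke and that one should first use $(h-1,h)$ merely to \emph{locate} an early facet containing $M=C_{h-1}\cap C_h$, and only then extract from $(r_0,h-1)$ a codimension-one face of $C_{h-1}$ missing $C_h$. I expect the verification that each produced witness genuinely has index strictly below the relevant $j$ (and the repeated exclusion $r\neq h-1$ through the cardinality hypothesis) to be the only bookkeeping points needing care.
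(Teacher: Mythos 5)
Your proof is correct. The paper itself only cites this statement from \cite[Proposition 5.3]{BoloSentiBulletin} without reproving it, but its proof of the strong-shellability analogue (Proposition \ref{lemma che non sa nessuno strong}) follows exactly your strategy: the easy pairs are dispatched by reusing the original witnesses (with $r=h-1$ excluded via the hypothesis $|C_{h-1}\cap C_h|<k-1$), and the transposed pair $(C_h,C_{h-1})$ is handled by the same two-step chaining --- first using the pair $(h-1,h)$ to locate an early facet containing $C_{h-1}\cap C_h$, then applying the shelling condition to that facet and $C_{h-1}$ to extract a codimension-one face of $C_{h-1}$ avoiding $C_h$.
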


\section{Linear shellability}\label{sezione linear}

In \cite{BoloSentiBulletin} it was affirmed that no linear extension of the Bruhat order on the facets of the Hachimori’s complex is a shelling order. This is true when the names of the vertices are chosen as in \cite{Hachimori}. On the other hand, if the names are as in Figure \ref{Hachimori}, i.e. they are changed according to the permutation $6457312$, then every linear extension of the Bruhat order on the facets of this complex is a shelling order. This is not an accident, but a feature of the class of pure simplical complexes which we introduce in the following definition. Recall that $V(X)$ stands for the set of vertices of the simplicial complex $X$.

\begin{dfn}\label{def lin shell}
    Let $X$ be a pure simplicial complex of dimension $d\in \N$ with $n\in \N\setminus \{0\}$ vertices.  
    We say that $X$ is \emph{linearly shellable} if there exists a bijective function $\sigma : V(X)\rightarrow [n]$ which induces a function 
     $\sigma_X : X\rightarrow [n]^{d+1}_<$ such that the induced subposet $\imm(\sigma_X) \subseteq [n]^{d+1}_<$
    has a linear extension which is a shelling order.
\end{dfn}
By \cite[Theorem 4.8]{samper}, complexes with the quasi--exchange property are linearly shellable. In particular, matroids and order ideals of the Bruhat order are linearly shellable. Notice that in \cite[Theorem 3.4]{BoloSentiBulletin} all linear extensions of the Bruhat order of a complex with the quasi--exchange property are shelling orders. In the following theorem we prove that, when we label with natural numbers the vertices of a pure simplicial complex, either no linear extension of the Bruhat order is a shelling order, or every linear extension is a shelling order.

\begin{figure} \begin{center}\begin{tikzpicture}
\matrix (a) [matrix of math nodes, column sep=0.3cm, row sep=0.2cm]{
        & 567    &    \\
   367     &     &   457    \\
   247     &     &   346    \\
        &     &   246     \\
   237      & 345    &   146     \\
   &        &   145 \\
   126 &          &  135  \\
        &  123       &    \\};

\foreach \i/\j in {1-2/2-1, 1-2/2-3, 2-1/3-1, 2-1/3-3, 2-3/3-1, 2-3/3-3, 3-1/5-1, 3-1/4-3, 3-3/5-2, 3-3/4-3, 4-3/5-3, 5-2/6-3,
5-3/6-3, 5-1/7-1, 5-1/7-3, 6-3/7-3, 7-1/8-2, 7-3/8-2}
    \draw (a-\i) -- (a-\j);
\end{tikzpicture} \caption{Hasse diagram of the Hachimori's example of a shellable but not extendably shellable simplicial complex. The linear extensions are shelling orders.} \label{Hachimori} \end{center} \end{figure}
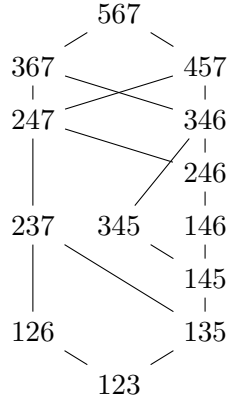

\begin{thm} \label{teorema tutti shelling}
 Let $X$ be a pure simplicial complex of dimension $d\in \N$ with $n\in \N\setminus \{0\}$ vertices. 
    The following are equivalent:
    \begin{enumerate}
        \item[1)] $X$ is linearly shellable;
        \item[2)] there exists a bijective function $\sigma : V(X)\rightarrow [n]$ which induces a function $\sigma_X: X\rightarrow [n]^{d+1}_<$ such that $<_{lex}$ on $\imm(\sigma_X)$ is a shelling order;
        \item[3)] there exists a bijective function $\sigma : V(X)\rightarrow [n]$ which induces a function $\sigma_X: X\rightarrow [n]^{d+1}_<$ such that every linear extension of $\imm(\sigma_X)$ is a shelling order. 
    \end{enumerate}
\end{thm}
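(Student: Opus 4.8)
The plan is to fix the labelling once and for all: for a given bijection $\sigma:V(X)\to[n]$ set $P:=\imm(\sigma_X)\subseteq[n]^{k}_<$ with $k:=d+1$, and prove the three conditions equivalent \emph{for this fixed $\sigma$}; applying the existential quantifier over $\sigma$ then yields the theorem. Since $<_{lex}$ is a linear extension of the Bruhat order on $[n]^k_<$ (Section 2), the implications $3)\Rightarrow 2)\Rightarrow 1)$ are immediate: if every linear extension of $P$ is a shelling order then in particular $<_{lex}$ is, and $<_{lex}$ is itself a linear extension. All the content is therefore in $1)\Rightarrow 3)$: assuming that \emph{some} linear extension $C$ of $(P,\leqslant)$ is a shelling order, I must show that \emph{every} linear extension of $(P,\leqslant)$ is one.

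First I would record two observations about a linear extension $C$ of the Bruhat order that happens to be a shelling order. \textbf{(a)} Every shelling witness lies strictly below: if $C_r=C_j+\{a,b\}$ with $r<j$, then writing $C_r=(C_j\setminus\{a\})\cup\{b\}$ one has $C_r<C_j$ in the Bruhat order, because $b>a$ would give $C_r>C_j$ and hence $r>j$, contradicting $r<j$. \textbf{(b)} Consequently $P$ has a minimum. Indeed $C_1$ is a minimal element, and if $m'\neq C_1$ were another minimal element, say $m'=C_j$ with $j\geqslant 2$, then the shelling condition for the pair $(1,j)$ would produce a witness $C_r<C_j=m'$ by (a), contradicting the minimality of $m'$; so the unique minimal element $m=C_1$ is a minimum and begins every linear extension of $P$. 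In particular position $1$ is frozen, and any adjacent transposition turning one linear extension into another occurs at positions $(p,p+1)$ with $p\geqslant 2$.

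The engine of $1)\Rightarrow 3)$ is a \emph{transposition lemma}: if $C$ is a shelling order and a linear extension of $(P,\leqslant)$, and the facets in positions $p,p+1$ (with $p\geqslant 2$) are Bruhat--incomparable, then the sequence $C'$ obtained by swapping them is again a shelling order. To prove it I would first note that two distinct incomparable $k$--sets satisfy $|C_p\cap C_{p+1}|\leqslant k-2$, since sharing exactly $k-1$ elements forces comparability. Then I would truncate $C$ to its initial segment of length $p+1$, which is again a shelling order (directly from the definition), apply Proposition \ref{lemma che non sa nessuno} to conclude that swapping its last two entries keeps it a shelling order, and finally re-attach the tail $C_{p+2},\ldots,C_h$: for an index $j\geqslant p+2$ the facet $C_j$ and the \emph{set} of earlier facets are unchanged, every old witness still sits at a position $<j$ (positions are only permuted inside $\{p,p+1\}$), and the restriction conditions for the pairs $(p,j)$ and $(p+1,j)$ are literally the old ones with the two roles interchanged; hence $C'$ satisfies the shelling condition at every position. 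Invoking the classical fact that the linear extensions of a finite poset are connected by adjacent transpositions of incomparable elements, I can transform $C$ into an arbitrary linear extension $C'$ through a chain in which every intermediate step is, by the transposition lemma, a shelling order; this yields $3)$.

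The main obstacle is bookkeeping around the boundary of the swap rather than any deep difficulty: one must be sure that \emph{no} required transposition occurs at positions $(1,2)$, where Proposition \ref{lemma che non sa nessuno} (needing length $\geqslant 3$) does not apply and where, in fact, a shelling can never have two incomparable facets. This is exactly what observation \textbf{(b)} guarantees, freezing the minimum $m$ in position $1$; and (b) rests on observation \textbf{(a)}, the fact that in a linear extension shelling all witnesses point downward in the Bruhat order. Verifying the tail-preservation in the transposition lemma, and the elementary intersection bound $|C_p\cap C_{p+1}|\leqslant k-2$ for incomparable facets, are the only routine checks remaining.
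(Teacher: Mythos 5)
Your proof is correct and follows essentially the same route as the paper: reduce $1)\Rightarrow 3)$ to swapping adjacent Bruhat-incomparable facets in a linear extension and invoke Proposition \ref{lemma che non sa nessuno} on a suitable truncation. You are in fact somewhat more careful than the paper, since your observations (a) and (b) (witnesses point downward, hence the poset has a minimum frozen in position $1$) explicitly rule out the boundary case of a swap at positions $(1,2)$, and you spell out the tail-reattachment that the paper leaves implicit.
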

\begin{proof}
    Clearly $3)$ implies $2)$, since $<_{lex}$ is a linear extension of the Bruhat order on $[n]^k_<$. It is also clear, by Definition \ref{def lin shell}, that $2)$ implies $1)$. It remains to prove that $1)$ implies $3)$. Let $X$ be linearly shellable and $l:=(y_1,\ldots,y_h)$ a linear extension of $Y:=\imm(\sigma_X)$ which is a shelling order. 
    If $h=1$ there is nothing to prove. If $h=2$, $Y$ is a chain of cardinality $2$, and the unique linear extension is a shelling order by assumption. Let $h>2$.
    By deciding that this linear extension corresponds to the identity permutation $e\in S_h$, any other linear extension of $Y$ corresponds to a permutation $\sigma \in S_h$. 
    So let $l\mapsto e$, where $e=123\ldots h \in S_h$. Hence the set of linear extensions of $Y$ is in bijection with a set $L\subseteq S_h$. 
    We prove, by induction on $\ell(w)$, that $w$ is a shelling order, for all $w\in L$. 
    If $\ell(w)=0$ then $w=e$ and $e$ is a shelling order by assumption. 
    Let $\ell(w)>0$ and $s_i\in D_R(w)$, i.e. $w(i+1)<w(i)$.
    We claim that $ws_i$ is a linear extension. In fact, the facets $w(i)\in [n]^{d+1}_<$ and $w(i+1)\in [n]^{d+1}_<$ are incomparable in the Bruhat order $\leq_B$ on $[n]^{d+1}_<$. To see this, assume by contradiction that $w(i)$ and $w(i+1)$ are comparable. This implies that $w(i)<_B w(i+1)$, because $w$ is a linear extension of $\leq_B$. Therefore $e(w(i+1))=w(i+1)<w(i)=e(w(i))$ and $e(w(i+1))=w(i+1)>_B w(i)=e(w(i))$. This is in contradiction to $e$ being a linear extension of the Bruhat order. Hence, $w(i)$ and $w(i+1)$ are incomparable in the Bruhat order and, by our inductive hypothesis, $ws_i$ is a shelling order. Then we can apply Proposition \ref{lemma che non sa nessuno} to deduce that $w$ is a shelling order.
 \end{proof}
Theorem \ref{teorema tutti shelling} provides an alternative proof of \cite[Theorem 3.4]{BoloSentiBulletin}. In fact, by \cite[Theorem 4.8]{samper}, the lexicographic order is a shelling order for the complexes with the quasi--exchange property.
\begin{cor}
    Let $X\subseteq [n]^k_<$ with the quasi--exchange property. Then any linear extension of the Bruhat order of $X$ is a shelling order.
\end{cor}

In Figure \ref{Hasse0} is depicted the Hasse diagram of the Bruhat order of a simplicial complex in $[8]^4_<$ which is not linearly shellable, as was checked by using SageMath \cite{sagemath}. In the next section we provide a wide class of linearly shellable complexes arising from Coxeter groups. 

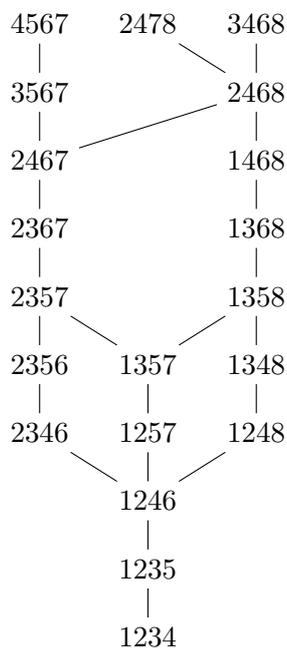
\begin{figure} \begin{center}\begin{tikzpicture}
\matrix (a) [matrix of math nodes, column sep=0.4cm, row sep=0.4cm]{
        & 4567    & 2478  & 3468        & \\
        & 3567    &       & 2468    & \\
        & 2467    &       & 1468    & \\
        & 2367    &       & 1368    & \\
        & 2357    &       & 1358    & \\
    &  2356       & 1357  & 1348        &  \\
   &  2346        & 1257  & 1248         & \\
        &         & 1246  &         & \\
        &         & 1235  &         & \\
        &         & 1234  &         & \\};

\foreach \i/\j in {10-3/9-3, 9-3/8-3, 8-3/7-2, 8-3/7-3,
8-3/7-4, 7-2/6-2, 7-3/6-3, 7-4/6-4, 6-2/5-2, 6-3/5-2,
6-3/5-4, 6-4/5-4, 5-2/4-2, 4-2/3-2, 3-2/2-2, 2-2/1-2,
5-4/4-4, 4-4/3-4, 3-4/2-4, 2-4/1-3, 2-4/1-4,
3-2/2-4}
    \draw (a-\i) -- (a-\j);
\end{tikzpicture} \caption{Hasse diagram of a shellable complex which is not linearly shellable.} \label{Hasse0} \end{center} \end{figure} 

\section{Coxeter complexes and linearity}\label{sezione Coxeter}

In \cite{bjorner} Bj\"{o}rner studied combinatorial properties of the so called Coxeter complex of a Coxeter group $W$. Among other results, he provides  shelling orders of such simplicial complex obtained from linear extensions of the weak order on $W$. In this section we consider Coxeter complexes of finite subsets of a Coxeter group $W$, obtaining shelling orders by any linear extension of a refinement of the Bruhat order on order ideals of $(W,\leq_R)$ and intervals $[u,v]_R$. These include lower Bruhat intervals of any quotient ${^JW}$. In the last part of the section we also consider type--selected Coxeter complexes of finite order ideals of quotients $W^J$, generalizing to Coxeter groups the result of \cite[Theorem 4.5]{BoloSentiBulletin}, and two--sided Coxeter complexes.
We prove that all such complexes are  linearly shellable, then providing a large class of complexes with this property.

We consider a Coxeter system
$(W,S)$; given $J\subseteq S$ finite and $w \in W$, we let $$P_J(w):=\left\{(P^{(s)}(w),s): s\in J \right\} \subseteq W\times J.$$
For $Y\subseteq W$, we define $$S(Y):=\bigcup_{y\in Y}\{s\in S: s\leq y\}.$$ Clearly if $Y$ is finite then $S(Y)$ is finite.
We now introduce the Coxeter complex of any finite subset of $W$. This extends to all Coxeter groups the construction of \cite[Section 4]{BoloSentiBulletin}, suitable for the barycentric subdivision of a pure simplicial complex.
\begin{dfn}
    Let $Y\subseteq W$ be finite. The \emph{Coxeter complex} of $Y$ is the pure simplicial complex of dimension $|S(Y)|-1$ whose set of facets is
    $C(Y):=\{P_{S(Y)}(y): y\in Y\}$.
\end{dfn} When $Y$ is an interval $[u,v]_R$ of the right weak order on $W$ we denote $C(Y)$ by $C(u,v)$. 
Our next aim is to prove that Coxeter complexes of weak intervals and of finite order ideals of the right weak order are linearly shellable. To do this we need the
%$$D_R(u,v):=\{s\in D_R(v):u\leq_R vs\}.$$
following lemma as preliminary result, whose proof follows the ideas of \cite[Theorem 2.1]{bjorner}.
\begin{lem} \label{lemma1}
    Let $u,v,w\in W$ be such that $u\leq_R v$
    and $u\leq_R w$. If $P^{(s)}(v)=P^{(s)}(w)$ for all $s\in D_R(u^{-1}v)$, then $v\leq_R w$.
\end{lem}
\begin{proof} Let $v=uz$ and $w=uz'$, with $\ell(v)=\ell(u)+\ell(z)$ and $\ell(w)=\ell(u)+\ell(z')$.
If $P^{(s)}(v)=P^{(s)}(w)$ for all $s\in D_R(u^{-1}v)$, then
$w^{-1}v \in W_{S\setminus \{s\}}$, for all $s\in D_R(z)$.
Hence $w^{-1}v\in W_{S\setminus D_R(z)}$, i.e.
$zy=z'$ for some $y\in W_{S\setminus D_R(z)}$.
Since $z\in W^{S\setminus D_R(z)}$, we have that 
$\ell(zy)=\ell(z)+\ell(y)$, i.e. $z\leq_R z'$; 
this implies $v\leq_R w$.
\end{proof}
The statement of the following proposition is the analogous, for Coxeter complexes of weak intervals, of the fact that as posets $[u,v]_R \simeq [e,u^{-1}v]_R$.
\begin{prop} \label{isomorfismo complessi}
    Let $u \leq_R v$. Then, as simplicial complexes, $C(u,v)\simeq C(e,u^{-1}v)$.
\end{prop}
\begin{proof}
 Let $\phi: C(u,v) \rightarrow C(e,z)$ be the function defined by the assignment $(P^{(s)}(w),s)\mapsto (P^{(s)}(u^{-1}w),s)$, for all $s\in S$, $w\in [u,v]_R$. It holds that 
 $P^{(s)}(x)=P^{(s)}(y)$ if and only if $P^{(s)}(u^{-1}x)=P^{(s)}(u^{-1}y)$, for all $x,y\in [u,v]_R$. In fact, if we let $z_x:=u^{-1}x$ and $z_y:=u^{-1}y$, we have that
 \begin{eqnarray*}
     P^{(s)}(z_x)=P^{(s)}(z_y) &\Leftrightarrow& z_y^{-1}z_x \in W_{S\setminus \{s\}} \\
     &\Leftrightarrow& y^{-1}x\in W_{S\setminus \{s\}} \\
      &\Leftrightarrow& P^{(s)}(x)=P^{(s)}(y),
 \end{eqnarray*} for all $s\in S$, $x,y\in [u,v]_R$. Then the function $\phi$ is an isomorphism of simplicial complexes.
\end{proof}
Now provide a relation between linear extensions of the weak order and shelling orders of Coxeter complexes of order ideals and weak intervals.
\begin{prop} \label{prop shelling debole ideali}
    Let $I\subseteq W$ be an order ideal or an interval of the poset $(W,\leq_R)$. If $L:=(w_1,w_2,\ldots)$ is a linear extension of $I$, then the tuple $(P_{S(I)}(w_1),P_{S(I)}(w_2),\ldots)$
    is a shelling order for $C(I)$.
\end{prop}
\begin{proof} 
    The shellability of $I$ is a consequence of \cite[Theorem 2.1]{bjorner} and Proposition \ref{isomorfismo complessi}. If $I$ is an order ideal, then there exists a linear extension of $(W,\leq_R)$ whose first $p$ elements are the elements of $I$ and then we conclude again by \cite[Theorem 2.1]{bjorner}. When $I$ is an interval the result follows by the previous point and the isomorphism of Proposition \ref{isomorfismo complessi}.
\end{proof}
In the proof of the next theorem we introduce a refinement $\preccurlyeq$ of the Bruhat order on $W$. For any $Y\subseteq W$ and a suitable labeling of the vertices of the simplicial complex $C(Y)$, it turns out that the poset $(Y,\preccurlyeq)$ is isomorphic to the Bruhat order on $C(Y)$ (we are considering simplicial complexes as sets of Grassmannian permutations). In Figures \ref{Hasse1} and \ref{Hasse2} are depicted the Hasse diagrams of $([s_1s_2,w_0]_R,\leq_R)$ and $([s_1s_2,w_0]_R,\preccurlyeq)$, respectively; we are in type $A_3$, with generators $S=\{s_1,s_2,s_3\}$. The element $w_0$ is the one of maximal length. In these figures the elements of $W$ are permutations in one--line notation. 
\begin{figure} \begin{center}\begin{tikzpicture}
\matrix (a) [matrix of math nodes, column sep=0.4cm, row sep=0.6cm]{
           & 4321    & \\
     4231   &     & 3421\\
     2431   &     & 3241\\
     2341      &  & 3214\\
           & 2314 &\\};
\foreach \i/\j in {5-2/4-1,
5-2/4-3, 4-1/3-1, 4-1/3-3, 4-3/3-3, 3-1/2-1, 3-3/2-3, 2-1/1-2, 2-3/1-2}
    \draw (a-\i) -- (a-\j);
\end{tikzpicture} \caption{Hasse diagram of $([u,w_0]_R,\leq_R)$.} \label{Hasse1} \end{center} \end{figure}
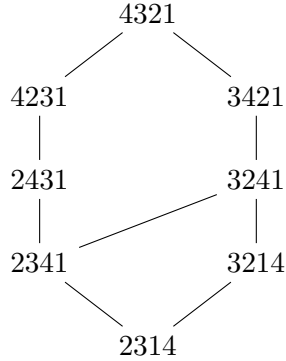
\begin{thm} \label{teorema lineare} Let $I \subseteq W$ be a finite order ideal of $(W,\leq_R)$. Then 
    $C(I)$ is
    linearly shellable.
\end{thm}
\begin{proof} For $Y\subseteq W$ and $s\in S$ we define $P^{(s)}(Y):=\{P^{(s)}(y):y\in Y\}$.
 Let $k:=|S(I)|$, $s_1<\ldots<s_k$ a total order on $S(I)$ and define $P(w):=P_{S(I)}(w)$ for shortness. Let $L_s$ be a linear extension of the Bruhat order on $P^{(s)}(I)$, for all $s\in S$, and
    $h:[k]\rightarrow \N$ be the function defined by setting $h(i)=|P^{(s_i)}(I)|$, for all $i\in [k]$.
    Then we have a total order
    $$(w^1_1,s_1),\ldots ,(w^1_{h(1)},s_1), (w^2_1,s_2) , (w^2_2,s_2), \ldots , (w^k_{h(k)},s_k)$$
    such that $L_{s_i}=(w^i_{1},\ldots,w^i_{h(i)})$, for all $i\in [k]$. This induces a bijection $\sigma : \bigcup_{w\in I}P(w) \rightarrow [n]$, where $n:=|\bigcup_{w\in I}P(w)|$. We then define a function $\sigma_I: I \rightarrow [n]^k_<$ by setting
    $$\sigma_I(w)=\left(\sigma(P^{(s_1)}(w)),\ldots, \sigma(P^{(s_k)}(w))\right)\in [n]^k_<,$$ for all $w\in I$,
    and a partial order on $I$ by setting $x\preccurlyeq y$ if and only if $\sigma_I(x)\leq \sigma_I(y)$, for all $x,y\in I$ (the antisymmetry follows by Corollary \ref{deodhar} for $J=\varnothing$). Since the Bruhat order is a refinement of the weak order, and by Corollary \ref{deodhar} for $J=\varnothing$, we have injective morphisms of posets $$(I,\leq_R)\hookrightarrow (I,\leq)\hookrightarrow (I,\preccurlyeq).$$ This implies that any linear extension of $(I,\preccurlyeq)$
    is a linear extension of $(I,\leq)$ and $(I,\leq_R)$.
Let $p:=|I|$ and $C=(w_1,\ldots,w_p)$ be a linear extension of $(I,\preccurlyeq)$. By Proposition \ref{prop shelling debole ideali}, $(P(w_1),\ldots, P(w_p))$ is a shelling order for $I$.
We have that $\sigma_I:(I,\preccurlyeq) \rightarrow \imm(\sigma_I)$ is an isomorphism of posets; hence $(\sigma_I(w_1),\ldots,\sigma_I(w_p))$ is a linear extension of $\imm(\sigma_I)\subseteq [n]^k_<$ which is a shelling order, i.e. $I$ is linearly shellable.
\end{proof}

\begin{figure} \begin{center}\begin{tikzpicture}
\matrix (a) [matrix of math nodes, column sep=0.4cm, row sep=0.6cm]{
           & 4321    & \\
     4231   &     & 3421\\
     2431   &     & 3241\\
     2341      &  & 3214\\
           & 2314 &\\};
\foreach \i/\j in {5-2/4-1,
5-2/4-3, 4-1/3-1, 4-1/3-3, 4-3/3-3, 3-1/2-1, 3-1/2-3,  3-3/2-3, 3-3/2-1, 2-1/1-2, 2-3/1-2}
    \draw (a-\i) -- (a-\j);
\end{tikzpicture} \caption{Hasse diagram of $([u,w_0]_R,\preccurlyeq)$.} \label{Hasse2} \end{center} \end{figure}
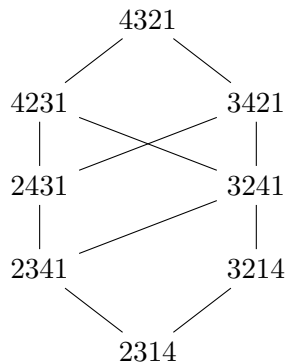
\begin{oss} \label{osservazione Bruhat}
    In the proof of Theorem \ref{teorema lineare} we have used the poset injection 
$([u,v]_R,\leq) \hookrightarrow ([u,v]_R,\preccurlyeq)$. The opposite injection of posets is not true in general. For example, in $S_4$ the posets $([e,4231]_R,\leq)$ and $([e,4231]_R,\preccurlyeq)$ are not isomorphic. In the case of Figure \ref{Hasse2} we have that $([u,v]_R,\leq)\simeq ([u,v]_R,\preccurlyeq)$.
\end{oss}
Our first corollary of the previous theorem concerns weak intervals. We are using the fact that linear shellability is preserved under isomorphism of simplicial complexes.
\begin{cor} \label{corollario intervalli}
Let $u,v\in W$ be such that $u\leq_R v$. Then the Coxeter complex of $[u,v]_R$ is linearly shellable. 
\end{cor}
\begin{proof}
    Since any interval $[e,v]_R$ is an order ideal of $(W,\leq_R)$, the result follows by Theorem \ref{teorema lineare} and Proposition \ref{isomorfismo complessi}.
\end{proof}
Since any order ideal of the right weak order of a quotient ${^JW}$ is an order ideal of $(W,\leq_R)$, by Theorem \ref{teorema lineare} we obtain a second corollary.
\begin{cor} \label{corollario quozienti} Let $J\subseteq S$. Then 
    the Coxeter complex of a finite order ideal of $({^JW},\leq_R)$ is linearly shellable. In particular, this is true for lower Bruhat intervals of ${^JW}$.  
\end{cor}
\begin{oss}
    We observe that the Coxeter complex of a Bruhat interval could be not shellable. For example,  let $u=2134$ and $v=4231$ in $S_4$. The simplicial complex $C([u,v])$ has $14$ facets and dimension $2$. Its $h$-vector is $(1,9,5,-1)$ and then it is not shellable.
\end{oss}
For the definition of \emph{Coxeter cones} we refer to \cite{stembridge}. They are Coxeter complexes of order ideals of the right weak order of a finite Coxeter group $W$ (see \cite[Proposition 2.6]{stembridge}); hence we obtain a third corollary.
\begin{cor} \label{cor coni}
A Coxeter cone is linearly shellable. 
\end{cor}
By Corollary \ref{cor deodhar1} for $J=\varnothing$, the simplicial complex $C(I)$ is either \emph{thin} or \emph{subthin}, i.e. either every face of dimension $|S(I)|-2$ is contained in exactly two facets or it is not thin and every face of dimension $|S(I)|-2$ is contained in at most two facets. From this fact and the shellability of $C(u,v)$ we obtain the next corollary (see e.g. \cite[Appendix A2]{BB}). 
\begin{cor} Let $I \subseteq W$ be a finite order ideal or an interval of $(W,\leq_R)$. Then the geometric realization of the Coxeter complex of $I$ is $PL$-homeomorphic to 
\begin{enumerate}
    \item the sphere ${\SSS}^{|S(I)|-1}$, if $I=W$;
    \item the ball ${\BBB}^{|S(I)|-1}$, otherwise.
\end{enumerate}
\end{cor}
We now extend to all Coxeter groups the result of \cite[Theorem 4.5]{BoloSentiBulletin}. The cited theorem is related to barycentric subdivisions, in particular of lower Bruhat intervals in maximal quotients of $S_n$, i.e. of independence complexes of Schubert matroids (see \cite{fan} and references therein for this class of matroids). Let $J\subseteq S$ and $Y\subseteq W$ be a finite set; we define
$$C_J(Y):=\{P_{J\cap S(Y)}(w): w\in Y\} \subseteq \mathcal{P}(W \times J).$$
Following \cite{bjorner}, we call the simplicial complex $C_J(Y)$ a \emph{type--selected Coxeter complex} of $Y$. This is a pure simplicial complex of dimension $|J\cap S(Y)|-1$.
\begin{thm}\label{teorema selected}
Let $J\subseteq S$ and $I\subseteq W^J$ be a finite order ideal of $(W^J,\leq)$. Then any linear extension of $I$ is a shelling order for
$C_{S\setminus J}(I)$. Moreover $C_{S\setminus J}(I)$ is linearly shellable.    
\end{thm}
\begin{proof}
  Define $K:=(S\setminus J)\cap S(I)$ and  $k:=|I|$. Let $L:=(w_1,\ldots,w_k)$ be a linear extension of $I$. For $1< j \leq k$, let $w:=w_j$. Let $i\in [k-1]$ and assume $|P_K(w_i)\cap P_K(w)|<|K|-1$. By Corollary \ref{cor deodhar1} we have that $r\in D_R(w)$ implies $P^{J\cup \{r\}}(w)\neq P^{J\cup \{r\}}(w_i)$. So consider $r\in D_R(w)$ and define $u:=P^{J\cup \{r\}}(w)$. Then $u<w$ and $u\in W^J$. Since $I$ is an order ideal of the Bruhat order, we have that $u\in I$. Therefore there exists $h\in [k]$ such that $u=L_h$.
By Corollary \ref{cor deodhar1} we have that $P_K(u)= P_K(w) + \{(P^{(r)}(w),r),(P^{(r)}(u),r)\}$. Then $(P_K(w_1),\ldots,P_K(w_k))$ is a shelling order.
The linear shellability of $C_{S\setminus J}(I)$ follows as in the proof of Theorem \ref{teorema lineare}.
\end{proof}
We state now a formula for the $h$-polynomials of the complexes of this section. By Proposition \ref{isomorfismo complessi}, it includes also the $h$-polynomials of weak intervals.
\begin{cor}\label{h-polynomial} Let $I\subseteq W$ be a finite set. Then the polynomial 
$$h(I)=\sum\limits_{w\in I}q^{|D_R(w)|}$$
is the $h$-polynomial of:
\begin{enumerate}
    \item the Coxeter complex $C(I)$, if $I\subseteq W$ is a finite order ideal of $(W,\leq_R)$;
    \item the type--selected Coxeter complex $C_{S\setminus J}(I)$, if $J\subseteq S$ and $I\subseteq W^J$ is a finite order ideal of $(W^J,\leq)$.
\end{enumerate}
\end{cor}
\begin{proof}
    We prove the result in the type--selected case. We refer to \cite[Appendix A2]{BB} for definitions and results about $h$-polynomials of shellable complexes and restrictions of facets. Define $K:=(S\setminus J)\cap S(I)$ and  $k:=|I|$.
Let $L=(w_1,\ldots,w_k)$ be a linear extension of the Bruhat order of $I$. By Theorem \ref{teorema selected}, $L$ is a shelling order. Let $j\in [k]$ and define  $w:=w_j$. We claim that the a vertex $(P^{(r)}(w),r)$ lies in the restriction of the facet $P_K(w)$ if and only if $r\in D_R(w)$. From the proof of Theorem \ref{teorema selected} we deduce that if $r \in D_R(w)$ then $(P^{(r)}(w),r)$ is a vertex of the restriction of $P_K(w)$. So assume  $r\not \in D_R(w)$; this implies $P^{J\cup \{r\}}(w)=w$. Let $u\in I\setminus \{w\}$ such that $P_K(w)\cap P_K(u)=P_K(w)\setminus \{(P^{(r)}(w),r)\}$. By Corollary \ref{cor deodhar1} this is equivalent to $P^{J\cup \{r\}}(u)=w$. Then $u=u^{J\cup \{r\}}u_{J\cup \{r\}}=wu_{J\cup \{r\}}>w$, i.e. $u=L_k$, for some $k>j$. Hence $(P^{(r)}(w),r)$ does not lie in the restriction of $P_K(w)$. Therefore the cardinality of the restriction of the facet $P_K(w)$ coincides with $|D_R(w)|$ and the result follows.
\end{proof}

\begin{ex} \label{esempio selezionato}
    Let $S=\{s_1,s_2,s_3\}$, $J:=\{s_2\}$, $K:=S\setminus J$, $4132 \in S_4^J$ and $I:=\{z\in S_4^J: z\leq 4132\}$. A linear extension of $I$ is $$1234,2134,1243,2143,1342,3124,4123,3142,4132.$$ The complex $C_K(I)$ has $7$ vertices and $9$ facets. Its $h$-polynomial is $1+5q+3q^2$. Notice that $C_K(I)$ is neither thin nor subthin. In fact  
    \begin{itemize}
        \item $P_K(1234)=\{(1234,s_1),(1234,s_3)\}$,
        \item $P_K(2134)=\{(2134,s_1),(1234,s_3)\}$,
        \item $P_K(3124)=\{(3124,s_1),(1234,s_3)\}$.
    \end{itemize} The simplicial complex $C_K(I)$, which is a graph, is depicted in Figure \ref{grafo}.
\end{ex}
\begin{figure} \begin{center}\begin{tikzpicture}
\matrix (a) [matrix of math nodes, column sep=0.9cm, row sep=0.4cm]{
           & \bullet    & \\
     \bullet   &     & \bullet\\
        &  \bullet   & \\
     \bullet      &  & \bullet\\
           & \bullet &\\};
\foreach \i/\j in {1-2/2-1, 1-2/2-3, 1-2/3-2, 2-1/4-1,2-3/4-3,3-2/4-1,3-2/4-3, 4-1/5-2, 4-3/5-2}
    \draw (a-\i) -- (a-\j);
\end{tikzpicture} \caption{The simplicial complex $C_K(I)$ of Example \ref{esempio selezionato}.} \label{grafo} \end{center} \end{figure}
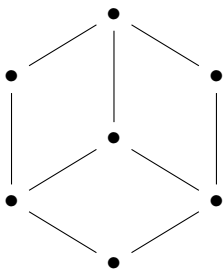

The last theorem of this section concerns linear shellability of \emph{two--sided
Coxeter complexes}. We refer to \cite{petersen} for definitions. We only observe that Bruhat orders are refinements of \emph{two--sided weak orders} (see also \cite[Exercise 3.8]{BB}) and that linear extensions of two--sided weak orders provide shelling order for two--sided Coxeter complexes, by \cite[Theorem 3]{petersen}. Having in mind these facts, the proof of the theorem is analogous to the one of Theorem \ref{teorema lineare} and it is omitted.
\begin{thm} \label{teorema two-side}
   The two--sided Coxeter complex of a finite Coxeter system is linearly shellable. 
\end{thm}

\section{Linear strong shellability}\label{sezione strong}
In \cite{coreani} has been introduced the notion of \emph{strong shellability}. We recall the definition (see also \cite[Definition 2.2]{coreani}). Let $h\in \N\setminus \{0\}$; an element $C\in \mathrm{Conf}_h([n]^k_<)$ is a \emph{strong shelling order} if $i,j\in [h]$, with $i<j$, implies that
there exists $r<j$ such that
$C_r=C_j+\{a,b\}$, with $a\in C_j\setminus C_i$ and $b\in (C_r \cap C_i)\setminus C_j$.
A pure simplicial complex is strongly shellable if there exists a strong shelling order on its facets. Now we introduce the notion of linear strong shellability. 
\begin{dfn}
    Let $X$ be a pure simplicial complex of dimension $d\in \N$ with $n\in \N\setminus \{0\}$ vertices.  
    We say that $X$ is \emph{linearly strongly shellable} if there exists 
    a bijective function $\sigma : V(X)\rightarrow [n]$ which induces 
    a function $\sigma_X : X\rightarrow [n]^{d+1}_<$ such that the induced subposet $\imm(\sigma_X) \subseteq [n]^{d+1}_<$
    has a linear extension which is a strong shelling order.
\end{dfn}
The following statement is the analogue of \cite[Proposition 5.3]{BoloSentiBulletin} for strong shelling orders.
\begin{prop}\label{lemma che non sa nessuno strong}
Let $C \in \mathrm{Conf}_h([n]^k_<)$ be a strong shelling order, with $h \geqslant 3$. If $|C_{h-1} \cap C_h|<k-1$ then $(C_1,\ldots,C_h,C_{h-1})$ is a strong shelling order.
\end{prop}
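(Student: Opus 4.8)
The plan is to prove that the swapped sequence $C'=(C_1,\ldots,C_{h-2},C_h,C_{h-1})$ is again a strong shelling order by checking the defining condition for every pair of positions $i<j$ in $C'$, exploiting that only the last two entries have moved. Writing $C'_m=C_m$ for $m\leq h-2$, $C'_{h-1}=C_h$ and $C'_h=C_{h-1}$, I would organize the verification by the value of $j$. When $j\leq h-2$ every index involved lies in the unchanged initial segment, so the witness $r<j$ supplied by the original strong shelling order for the pair $(i,j)$ transfers verbatim, since $r\leq h-2$ forces $C'_r=C_r$. The two genuinely new situations are $j=h-1$ (where $C_h$ now precedes $C_{h-1}$) and $j=h$ (where $C_{h-1}$ is now last).

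A recurring device is the hypothesis $|C_{h-1}\cap C_h|<k-1$, used in the form: any identity $C_{h-1}=(C_h\setminus\{a\})\cup\{b\}$ with $a\neq b$ would force $|C_{h-1}\cap C_h|=k-1$, a contradiction; thus $C_{h-1}$ and $C_h$ share no ridge, and neither can serve as the witness $C_r$ for the other. For $j=h-1$ and any $i\leq h-2$, the original condition for the pair $(i,h)$ yields $r<h$, and this observation rules out $r=h-1$; hence $r\leq h-2$, $C'_r=C_r$, and the condition passes to $C'$. For $j=h$ and any $i\leq h-2$, the original condition for $(i,h-1)$ gives $r<h-1\leq h-2$, which again transfers directly.

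The main obstacle is the remaining sub-case $j=h$, $i=h-1$: verifying that the new last facet $C_{h-1}$ is correctly attached relative to $C_h$, which now precedes it. Here the original order gives no direct information, since $C_h$ originally came \emph{after} $C_{h-1}$. I would resolve this by a two-step exchange. First apply the strong shelling condition to the original pair $(h-1,h)$ to obtain $r_0<h$ with $C_{r_0}=(C_h\setminus\{a_0\})\cup\{b_0\}$, where $a_0\in C_h\setminus C_{h-1}$ and $b_0\in(C_{r_0}\cap C_{h-1})\setminus C_h$; the ridge observation forces $r_0\leq h-2$, and this step records both $C_{r_0}\setminus\{b_0\}\subseteq C_h$ and $b_0\in C_{h-1}$. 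Then apply the strong shelling condition to the pair $(r_0,h-1)$ (legitimate as $r_0<h-1$) to obtain $r_1<h-1$ with $C_{r_1}=(C_{h-1}\setminus\{a_1\})\cup\{b_1\}$, where $a_1\in C_{h-1}\setminus C_{r_0}$ and $b_1\in(C_{r_1}\cap C_{r_0})\setminus C_{h-1}$.

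It remains to check that $C_{r_1}$ is the required witness, and this is exactly where the extra strength of strong shellability (the exchanged vertex lying inside the compared facet) is indispensable. Since $C_h\setminus\{a_0\}\subseteq C_{r_0}$ while $a_1\notin C_{r_0}$, the vertex $a_1$ can lie in $C_h$ only by equalling $a_0$; but $a_0\notin C_{h-1}\ni a_1$, so $a_1\in C_{h-1}\setminus C_h$. Likewise $b_1\in C_{r_0}$ and $b_1\neq b_0$ (because $b_0\in C_{h-1}$ whereas $b_1\notin C_{h-1}$), hence $b_1\in C_{r_0}\setminus\{b_0\}\subseteq C_h$, giving $b_1\in(C_{r_1}\cap C_h)\setminus C_{h-1}$. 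As $r_1\leq h-2$ we have $C'_{r_1}=C_{r_1}$ with $r_1<h$ in $C'$, which settles this sub-case and hence the proposition. I expect this final sub-case to be the only delicate point; everything else is bookkeeping confirming that moving the last two positions does not disturb the earlier witnesses.
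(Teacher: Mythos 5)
Your proposal is correct and follows essentially the same route as the paper's proof: the routine transfer of witnesses for pairs not involving both $C_{h-1}$ and $C_h$, and, for the delicate pair, the same two-step exchange (first applying the strong shelling condition to $(C_{h-1},C_h)$ to get a witness $C_{r_0}$ with $r_0<h-1$, then to $(C_{r_0},C_{h-1})$), concluding with the identical case analysis showing $a_1\in C_{h-1}\setminus C_h$ and $b_1\in C_h$. The only blemish is the typo ``$r<h-1\leq h-2$'', which should read $r<h-1$, hence $r\leq h-2$.
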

\begin{proof}
Consider $i<h-1$. For the pair $(C_i,C_{h-1})$ we have nothing to show. For the pair $(C_i,C_h)$, there exists $x \in C_h \setminus C_i$ and $j<h$ such that $C_j=C_h +\{x,y\}$, for some $y\in C_i$. By our assumption, $j \neq h-1$ and the strong shellability condition on this pair follows.

It remains to verify the strong shellability condition for $(C_h,C_{h-1})$. By the fact that $C$ is a strong shelling order and our assumption, there exists $z \in C_h \setminus C_{h-1}$ and $j<h-1$ such that $C_j=C_h+\{z,y\}$, for some $y\in C_{h-1}$. Since $C$ is a strong shelling order, there exists $c \in C_{h-1} \setminus C_j$ and $r<h-1$ such that $C_r=C_{h-1}+\{c,v\}$, for some $v\in C_j$. We have that $c \notin C_j=C_h+\{z,y\}$ and $c \neq z$; then $c \in C_{h-1} \setminus C_h$ and $C_r=C_{h-1}+\{c,v\}$, with $r<h-1$. Moreover $v\in C_j=C_h+\{z,y\}$. In fact, if $v\neq y$ then $v\in C_h$. If $v=y$ then $v\in C_{h-1}$, a contradiction. Then $v\in C_h$ and this completes the proof.
\end{proof}
Proposition \ref{lemma che non sa nessuno strong} has some interesting consequences, as for standard shellability. First of all we obtain the analogue of Theorem \ref{teorema tutti shelling} for strongly shellable complexes, whose proof we omit.
\begin{thm} \label{teorema strong} Let $X$ be a pure simplicial complex of dimension $d\in \N$ with $n\in \N\setminus \{0\}$ vertices. 
    The following are equivalent:
    \begin{enumerate}
        \item[1)] $X$ is linearly strongly shellable;
        \item[2)] there exists a bijective function $\sigma : V(X)\rightarrow [n]$ which induces a function $\sigma_X: X\rightarrow [n]^{d+1}_<$ such that $<_{lex}$ on $\imm(\sigma_X)$ is a strong shelling order;
        \item[3)] there exists a bijective function $\sigma : V(X)\rightarrow [n]$ which induces a function $\sigma_X: X\rightarrow [n]^{d+1}_<$ such that every linear extension of $\imm(\sigma_X)$ is a strong shelling order. 
    \end{enumerate}
\end{thm}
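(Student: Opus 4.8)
The plan is to imitate, \emph{mutatis mutandis}, the proof of Theorem \ref{teorema tutti shelling}, with Proposition \ref{lemma che non sa nessuno strong} playing the role that Proposition \ref{lemma che non sa nessuno} played there. The implications $3)\Rightarrow 2)$ and $2)\Rightarrow 1)$ are immediate: since $<_{lex}$ is a particular linear extension of the Bruhat order on $[n]^{d+1}_<$, statement $3)$ specializes to $2)$, and $2)$ is just a rephrasing of the definition of linear strong shellability. The whole content is therefore the implication $1)\Rightarrow 3)$.

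For $1)\Rightarrow 3)$, I would fix a labeling $\sigma$ for which some linear extension $l=(y_1,\ldots,y_h)$ of $Y:=\imm(\sigma_X)$ is a strong shelling order, and show that \emph{every} linear extension of $Y$ is one. The cases $h\leq 2$ are trivial, so assume $h>2$. Declaring $l$ to correspond to the identity $e\in S_h$, the linear extensions of $Y$ are indexed by a subset $L\subseteq S_h$, and I would prove by induction on $\ell(w)$ that each $w\in L$ is a strong shelling order. The base case $w=e$ holds by hypothesis. For $\ell(w)>0$ pick $s_i\in D_R(w)$; exactly as in Theorem \ref{teorema tutti shelling}, the two facets occupying positions $i$ and $i+1$ must be \emph{Bruhat-incomparable} (otherwise $e$ itself would fail to be a linear extension), so $ws_i$ is again a linear extension with $\ell(ws_i)<\ell(w)$, hence $ws_i\in L$ is a strong shelling order by the inductive hypothesis.

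It then remains to deduce that $w$, obtained from $ws_i$ by transposing the two incomparable facets in positions $i$ and $i+1$, is again a strong shelling order, and this is where Proposition \ref{lemma che non sa nessuno strong} enters. Incomparability of two distinct $k$-subsets forces their intersection to have size $<k-1$ (two $k$-subsets meeting in $k-1$ elements are always Gale-comparable), so the hypothesis of the proposition is met. The one point requiring care is that Proposition \ref{lemma che non sa nessuno strong} is phrased for the transposition of the \emph{last} two facets, whereas here the swap occurs at the interior positions $i,i+1$: I would handle this by restricting to the initial segment of length $i+1$ (an initial segment of a strong shelling order is again one), applying the proposition to that segment, and then observing that any strong-shelling requirement for a pair of positions whose larger index exceeds $i+1$ is insensitive to the swap, since it depends only on the \emph{set} of facets preceding that larger index and on the single facet at the smaller index, both of which are preserved by an interior transposition. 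I do not expect a genuine obstacle here: the argument is entirely parallel to the shellable case, the only substantive input being Proposition \ref{lemma che non sa nessuno strong}, which is the verbatim strong analogue of Proposition \ref{lemma che non sa nessuno}.
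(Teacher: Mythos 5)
Your proposal is correct and is precisely the argument the paper intends: the proof of Theorem \ref{teorema strong} is omitted there, with the remark that it is the verbatim analogue of Theorem \ref{teorema tutti shelling} using Proposition \ref{lemma che non sa nessuno strong} in place of Proposition \ref{lemma che non sa nessuno}, which is exactly what you carry out, and your handling of the interior transposition (restrict to the initial segment, apply the proposition to its last two entries, and note that the condition for any pair with larger index beyond the swap depends only on the set of preceding facets and the facet at the smaller index) is if anything more explicit than the paper's own treatment of the non-strong case. The only residual micro-point is that the initial segment of length $i+1$ must have length at least $3$ to invoke the proposition, but the case $i=1$ is vacuous: the first two facets of any (strong) shelling order necessarily intersect in $k-1$ elements, hence are Gale-comparable, so an adjacent incomparable pair can never sit in positions $1$ and $2$ of the inductively obtained strong shelling order $ws_1$.
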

The lexicographic order is a strong shelling order for \emph{pure shifted complexes}, by \cite[Proposition 6.5]{coreani}. Since the pure shifted complexes are exactly the order ideals of $[n]^k_<$, we obtain the next corollary.
\begin{cor}
    The order ideals of $[n]^k_<$ are linearly strongly shellable.
\end{cor}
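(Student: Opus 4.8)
The plan is to apply Theorem \ref{teorema strong} directly, so that the entire statement reduces to the single assertion that the lexicographic order is a strong shelling order. That assertion is exactly the content of the quoted \cite[Proposition 6.5]{coreani}, so almost all of the combinatorial work is already available and the proof becomes a short deduction.

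First I would recall that the pure shifted complexes are precisely the order ideals of the Gale (Bruhat) order on $[n]^k_<$, so that an arbitrary order ideal $X\subseteq [n]^k_<$ is one of them. The only point requiring a small check is that $X$ carries a labeling satisfying the hypotheses in the definition of linear strong shellability, i.e.\ a bijection $\sigma:V(X)\rightarrow [m]$ with $m=|V(X)|$. For this I would observe that the vertex set of a nonempty order ideal is automatically an initial segment $[m]$: if $A=\{a_1<\ldots<a_k\}\in X$ has $a_k=m=\max V(X)$ and $c\leq m$, then replacing the top element yields a $k$-subset below $A$ in the Gale order, namely $\{1,\ldots,k-1,c\}$ when $c\geq k$ (each $i\leq a_i$ and $c\leq a_k=m$) and the minimum $\{1,\ldots,k\}$ when $c<k$; in either case this subset lies below $A$, hence belongs to the down-set $X$ and contains $c$, so $c\in V(X)$. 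Thus $\sigma$ may be taken to be the identity and $\imm(\sigma_X)=X$.

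With this labeling in hand, I would invoke \cite[Proposition 6.5]{coreani}: the lexicographic order $<_{lex}$ on a pure shifted complex, in particular on $X=\imm(\sigma_X)$, is a strong shelling order. This is exactly statement $2)$ of Theorem \ref{teorema strong}. By the equivalence $2)\Leftrightarrow 1)$ proved in that theorem (which rests on Proposition \ref{lemma che non sa nessuno strong}), we conclude that $X$ is linearly strongly shellable, as desired.

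I do not expect a genuine obstacle here, since the two substantive inputs are already established: Proposition 6.5 of \cite{coreani} supplies that $<_{lex}$ is strongly shelling on shifted complexes, and Theorem \ref{teorema strong} upgrades this to linear strong shellability. The only item demanding care is the harmless bookkeeping about the vertex labeling indicated above; should one not wish to argue that $V(X)$ is an initial segment, it suffices to compose with the unique order-preserving bijection $V(X)\rightarrow [m]$, under which $X$ remains an order ideal and $<_{lex}$ remains a strong shelling order, so the hypothesis of Theorem \ref{teorema strong} is met in any event.
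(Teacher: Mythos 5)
Your proposal is correct and follows essentially the same route as the paper: identify order ideals of $[n]^k_<$ with pure shifted complexes, invoke \cite[Proposition 6.5]{coreani} to get that $<_{lex}$ is a strong shelling order, and conclude via the implication $2)\Rightarrow 1)$ of Theorem \ref{teorema strong}. The extra bookkeeping you supply about the vertex set being an initial segment (or relabeling via the order-preserving bijection) is a harmless refinement that the paper leaves implicit.
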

The \emph{reverse lexicographic order} on $[n]^k_<$ is defined by letting $x<_{revlex} y$ if and only if $x_m<y_m$, where $m:=\max\{i\in [k]: x_i\neq y_i\}$. The reverse lexicographic order is a linear extension of the Bruhat order on $[n]^k_<$ and it is a strong shelling order for independence complexes of matroids, by \cite[Proposition 6.3]{coreani}. This implies the next result.
\begin{cor}
The independence complex of a matroid is linearly strongly shellable.
\end{cor}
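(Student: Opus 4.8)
The plan is to read the corollary off directly from the definition of linear strong shellability, using the reverse lexicographic order as the witnessing linear extension. The paragraph preceding the statement already supplies the two ingredients that make this work: $<_{revlex}$ is a linear extension of the Bruhat order on $[n]^k_<$, and, by \cite[Proposition 6.3]{coreani}, $<_{revlex}$ is a strong shelling order for the independence complex of a matroid.

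First I would fix a matroid $M$ of rank $k=d+1$ on a ground set of size $n$, so that its independence complex $X$ is pure of dimension $d$ with facets the bases of $M$. A bijection $\sigma:V(X)\to[n]$ identifies each basis with a $k$-subset of $[n]$, hence with an element of $[n]^k_<$, and turns the set of bases into an induced subposet $\imm(\sigma_X)\subseteq[n]^k_<$ for the Bruhat (Gale) order. Since relabeling a matroid yields a matroid, the choice of $\sigma$ is immaterial, and any such bijection can be used.

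Next I would record the elementary fact that the restriction of a linear extension of a poset to an induced subposet is again a linear extension of that subposet. Applying this to $<_{revlex}$, which is a linear extension of $([n]^k_<,\leq)$, shows that $<_{revlex}$ restricts to a linear extension of $\imm(\sigma_X)$. By \cite[Proposition 6.3]{coreani} this ordering of the bases is a strong shelling order; hence $\imm(\sigma_X)$ admits a linear extension that is a strong shelling order, which is exactly the condition in the definition of linear strong shellability. One may then invoke Theorem \ref{teorema strong} to upgrade this to the stronger statement that $<_{lex}$, and in fact every linear extension of $\imm(\sigma_X)$, is a strong shelling order.

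The argument is essentially a matter of matching definitions, so I do not expect any serious obstacle. The only point that needs care is to confirm that $<_{revlex}$, when restricted from all of $[n]^k_<$ to the subposet of bases, genuinely remains a linear extension of the Bruhat order on that subposet, and that it is in this restricted form that \cite[Proposition 6.3]{coreani} certifies the strong shelling condition. Once this compatibility is checked, the conclusion is immediate from the definition of linearly strongly shellable.
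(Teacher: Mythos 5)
Your proposal matches the paper's argument exactly: the corollary is read off from the definition of linear strong shellability by taking $<_{revlex}$ (a linear extension of the Bruhat order on $[n]^k_<$, hence of the induced subposet of bases) as the witnessing linear extension, which is a strong shelling order by \cite[Proposition 6.3]{coreani}. The additional care you take in checking that the restriction remains a linear extension, and the optional upgrade via Theorem \ref{teorema strong}, are consistent with and slightly more explicit than the paper's one-line derivation.
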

As in \cite{BoloSentiBulletin}, Proposition \ref{lemma che non sa nessuno strong} enable us to define \emph{promotion} and \emph{evacuation} operators for strong shelling orders.
We refer to \cite[Section 5]{BoloSentiBulletin} for notation involved in the following corollary.
\begin{cor}
 Let $h\in \N\setminus \{0\}$ and $C\in \mathrm{Conf}_h([n]^k_<)$ be a strong shelling order. Then the promotion $\partial_D C$ and the evacuation $\epsilon_D C$ are strong shelling orders.
\end{cor}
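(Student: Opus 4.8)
The plan is to follow, step for step, the construction of promotion and evacuation for ordinary shelling orders in \cite[Section 5]{BoloSentiBulletin}, replacing \cite[Proposition 5.3]{BoloSentiBulletin} by its strong analogue Proposition \ref{lemma che non sa nessuno strong} wherever the shelling property is invoked. Recall that, as in \cite[Section 5]{BoloSentiBulletin}, the operators $\partial_D$ and $\epsilon_D$ are built as compositions of elementary transpositions $\tau_i$, where $\tau_i$ acts on a configuration $(C_1,\ldots,C_h)$ by swapping the consecutive facets $C_i$ and $C_{i+1}$ when $|C_i\cap C_{i+1}|<k-1$ and acts as the identity otherwise. Since the identity clearly preserves the strong shelling property, the whole corollary reduces to a single claim: if $C=(C_1,\ldots,C_h)\in \mathrm{Conf}_h([n]^k_<)$ is a strong shelling order and $|C_i\cap C_{i+1}|<k-1$ for some $i<h$, then the sequence $C'$ obtained by transposing $C_i$ and $C_{i+1}$ is again a strong shelling order. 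Granting this reduction step, both $\partial_D C$ and $\epsilon_D C$ are strong shelling orders, being finite compositions of such transpositions applied to $C$.

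To prove the reduction step I would first record the elementary fact that every prefix $(C_1,\ldots,C_m)$ of a strong shelling order is again a strong shelling order: for $i<j\leq m$ the witness $r<j$ furnished by the definition already satisfies $r<m$, so it lies in the prefix. Applying this observation to the prefix $(C_1,\ldots,C_{i+1})$, whose last two entries satisfy the hypothesis $|C_i\cap C_{i+1}|<k-1$, Proposition \ref{lemma che non sa nessuno strong} yields that the reordered prefix $(C_1,\ldots,C_{i-1},C_{i+1},C_i)$ is a strong shelling order. What then remains is to re-attach the untouched suffix $C_{i+2},\ldots,C_h$ and to verify the strong shelling condition for the full sequence $C'$.

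The point that makes this re-attachment go through is that the transposition leaves unchanged, for every position $q\geq i+2$, the \emph{set} $\{C'_1,\ldots,C'_{q-1}\}=\{C_1,\ldots,C_{q-1}\}$ of facets preceding position $q$, since only the positions $i,i+1<q$ were permuted. Because the strong shelling condition for a pair $(C_p,C_q)$ asks only for the existence of a witness $C_r$ with $r<q$ (the precise index $r$ being irrelevant), every witness available in $C$ for a facet-pair whose larger entry sits in position $\geq i+2$ is still available, as a member of the same set of predecessors, in $C'$; this covers all pairs of $C'$ with larger index at least $i+2$, including those whose smaller index fell on the swapped positions $i$ or $i+1$. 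The pairs both of whose positions lie in $\{1,\ldots,i+1\}$ are exactly the pairs governed by the reordered prefix, already shown to be a strong shelling order. These two cases together verify the condition for all pairs of $C'$, completing the reduction step.

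The hard part, I expect, will be precisely this passage from the ``last two positions'' formulation of Proposition \ref{lemma che non sa nessuno strong} to an \emph{interior} transposition, since the proposition is stated only for the terminal pair: the argument must excise the affected prefix, apply the proposition there, and then argue that the suffix cannot be spoiled. The set-of-predecessors observation is what dissolves this difficulty and is the only genuinely new ingredient; once it is in place, the remaining bookkeeping defining $\partial_D$ and $\epsilon_D$ and establishing their basic identities is a verbatim transcription of the ordinary-shelling treatment in \cite[Section 5]{BoloSentiBulletin}.
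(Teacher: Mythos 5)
Your argument is correct and is precisely the route the paper intends: the corollary is stated without proof as an immediate consequence of Proposition \ref{lemma che non sa nessuno strong}, obtained by rerunning the elementary-transposition construction of \cite[Section 5]{BoloSentiBulletin} with the strong analogue substituted for \cite[Proposition 5.3]{BoloSentiBulletin}, which is exactly your reduction (prefix, apply the proposition, re-attach the suffix via the unchanged set of predecessors). The only point worth adding is the boundary case $i=1$, where the prefix has length $2$ and Proposition \ref{lemma che non sa nessuno strong} (which assumes $h\geq 3$) does not literally apply; this case is vacuous, since the definition of strong shelling order applied to the pair $(C_1,C_2)$ forces $|C_1\cap C_2|=k-1$, so $\tau_1$ always acts as the identity.
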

%\section{Acknowledgements}

\section{Declarations}
\begin{itemize}
    \item Ethical Approval: Not applicable.
    \item Funding: None.
    \item Availability of data and materials: Not applicable.
\end{itemize}


\begin{thebibliography}{9}

\bibitem{ardila}
F. Ardila, F. Castillo and J. A. Samper, {\em The topology of the external activity complex of a matroid}, Electronic Journal of Combinatorics  23, 3 (2016).

%\bibitem{Reiner}
%E. Babson and V. Reiner, {\em Coxeter-like complexes}  Discrete Mathematics \& Theoretical Computer Science 6 (2004).

\bibitem{bjorner}
A. Bj\"{o}rner, {\em Some combinatorial and algebraic properties of Coxeter complexes and Tits buildings},  Advances in Mathematics 52.3, 173-212 (1984).

\bibitem{BB}
A. Bj\"{o}rner and F. Brenti, {\em Combinatorics of Coxeter Groups},
Graduate Texts in Mathematics, 231, Springer-Verlag, New York, 2005.

%\bibitem{bolognano}
%D. Bolognini, {\em Recursive Betti numbers for Cohen–Macaulay d-partite clutters arising from posets}, Journal of Pure and Applied Algebra 220.9, 3102-3118  (2016).

\bibitem{BoloSentiBulletin}
D. Bolognini and P. Sentinelli, {\em Linear extensions and shelling orders},  Bulletin of the London Mathematical Society (2023).

\bibitem{CarnevaleSentinelliDyer}
A. Carnevale, M. Dyer, and P. Sentinelli, {\em The intermediate orders of a Coxeter group},
Proceedings of the American Mathematical Society 151.04 1433-1443, (2023).

\bibitem{fan}
N. JY Fan and Yao Li, {\em On the Ehrhart polynomial of Schubert matroids},
Discrete \& Computational Geometry 71.2, 587-626 (2024).

\bibitem{coreani}
J. Guo, Y.-H. Shen, and T. Wu, {\em Strong shellability of simplicial complexes}, Journal of the Korean Mathematical Society 56.6, 1613-1639  (2019).

\bibitem{Hachimori}
M. Hachimori, {\em Simplicial complex library,\\} \href{https://infoshako.sk.tsukuba.ac.jp/~hachi/math/library/nonextend_eng.html}{https://infoshako.sk.tsukuba.ac.jp/~hachi/math/library/nonextend}

\bibitem{lacina}
S. Lacina, {\em Maximal chain descent orders}, arXiv preprint arXiv:2209.15142 (2022).

\bibitem{petersen}
T. K. Petersen, {\em A two--sided analogue of the Coxeter complex}, The Electronic Journal of Combinatorics, 25, 4 (2018).

\bibitem{sagemath}
{The Sage Developers}, \emph{{S}age{M}ath, the {S}age {M}athematics {S}oftware {S}ystem ({V}ersion 7.3)}, {\tt https://www.sagemath.org}.

\bibitem{samper}
J. A. Samper, {\em Quasi--matroidal classes of ordered simplicial complexes}, Journal of Combinatorial Theory, Series A 175, 105274 (2020).

%\bibitem{io}
%P. Sentinelli, {\em Isomorphisms of Hecke modules and parabolic Kazhdan–Lusztig polynomials}, Journal of Algebra 403, 1-18 (2014).

\bibitem{Stanley}
R. P. Stanley, {\em Enumerative Combinatorics}, Vol. 1, Wadsworth and
Brooks/Cole, Monterey, CA, 1986.

\bibitem{stembridge}
J. R. Stembridge, {\em Coxeter cones and their h-vectors}, Advances in Mathematics 217.5, 1935-1961 (2008).

\bibitem{wachs}
M. L. Wachs, {\em Quotients of Coxeter complexes and buildings with linear diagram}, European Journal of Combinatorics 7.1, 75-92 (1986).


\end{thebibliography}
\end{document}